\documentclass[a4paper,12pt,twoside]{amsart}
\usepackage[utf8]{inputenc}
\usepackage{latexsym}
\usepackage[pdftex]{graphicx}
\usepackage{hyperref}
\usepackage{amsmath,amssymb, amsthm, amscd}
\usepackage{multicol}
\usepackage[marginratio=1:1,height=597pt,width=460pt,tmargin=117pt]{geometry}
\usepackage[all]{xy}
\usepackage{etoolbox}
\usepackage{enumitem}
\usepackage{tikz-cd}
\expandafter\patchcmd\csname\string\proof\endcsname
  {\normalparindent}{0pt }{}{}
\makeatletter
\patchcmd{\@thm}{\thm@headfont{\scshape}}{\thm@headfont{\scshape\bfseries}}{}{}
\patchcmd{\@thm}{\thm@notefont{\fontseries\mddefault\upshape}}{}{}{}
\makeatother
\makeatletter
\patchcmd\@thm
  {\let\thm@indent\indent}{\let\thm@indent\indent}%
  {}{}
\makeatother
\newtheorem{theorem}[equation]{Theorem}

\theoremstyle{definition}

\theoremstyle{definition}

\theoremstyle{remark}

\numberwithin{equation}{section}
\setcounter{tocdepth}{1}
\newcommand{\iso}{\xrightarrow{
   \,\smash{\raisebox{-0.50ex}{\ensuremath{\scriptstyle\sim}}}\,}}
\title
[Non-admissible irreducible representations]
{On non-admissible irreducible modulo $p$ representations of $GL_{2}(\mathbb{Q}_{p^{2}})$}
\author{Eknath Ghate}
\address{School of Mathematics, Tata Institute of Fundamental Research \\ Homi Bhabha Road, Mumbai - 400005, India.}
\email{eghate@math.tifr.res.in}
\author{Mihir Sheth}
\address{School of Mathematics, Tata Institute of Fundamental Research \\ Homi Bhabha Road, Mumbai - 400005, India.}
\email{mihir@math.tifr.res.in}
\subjclass[2010]{22E50, 11S37}
\begin{document}
\maketitle
\begin{abstract}
We use a Diamond diagram attached to a 2-dimensional reducible split mod $p$ Galois representation of $\mathrm{Gal}(\overline{\mathbb{Q}_{p}}/\mathbb{Q}_{p^{2}})$ to 
construct a non-admissible smooth irreducible mod $p$ representation of $GL_{2}(\mathbb{Q}_{p^{2}})$ following the approach of Daniel Le. 
\end{abstract}
\tableofcontents
\section{Introduction}

Let $p$ be a prime number, $\mathbb{Q}_{p}$ be the field of $p$-adic numbers, and $\overline{\mathbb{F}_{p}}$ be the algebraic closure of the finite field $\mathbb{F}_{p}$ of cardinality $p$. The study of the admissibility of smooth irreducible representations of connected reductive $p$-adic groups goes back 
to Harish-Chandra (\cite{HC70}). Building upon his work, Jacquet proved that every such representation over the field of complex numbers is admissible (\cite{Jac75}, see also Bernstein \cite{Ber74}). This result was 
extended by Vign\'{e}ras to smooth irreducible representations over any algebraically closed field of characteristic not equal to $p$ (\cite{Vig96}). It is also known that every 
smooth irreducible representation of $GL_{2}(\mathbb{Q}_{p})$ over $\overline{\mathbb{F}_{p}}$ is admissible (see Berger \cite{ber12}). 
Let $\mathbb{Q}_{p^2}$ be the unramified extension of $\mathbb{Q}_{p}$ of degree $2$. In this paper, we establish 
the existence of a \emph{non-admissible} smooth irreducible $\overline{\mathbb{F}_{p}}$-linear  representation of $GL_{2}(\mathbb{Q}_{p^{2}})$, for $p > 2$, following the 
approach of Daniel Le (\cite{le19}). Our result supports the viewpoint of Breuil and Pa\v sk$\bar{\mathrm{u}}$nas that the mod $p$ (and $p$-adic) representation 
theory of $GL_{2}(\mathbb{Q}_{p^{2}})$ is more complicated than that of $GL_{2}(\mathbb{Q}_{p})$ (\cite{bp12}, see also Schraen \cite{Sch15}). 

Let $G=GL_{2}(\mathbb{Q}_{p^2})$, $K=GL_{2}(\mathbb{Z}_{p^2})$, and $\Gamma =GL_{2}(\mathbb{F}_{p^2})$, where $\mathbb{Z}_{p^2}$ is the ring of integers
of $\mathbb{Q}_{p^{2}}$ with residue field $\mathbb{F}_{p^2}$. Fix an embedding $\mathbb{F}_{p^{2}}\hookrightarrow\overline{\mathbb{F}_{p}}$. Let $I$ and $I_{1}$ 
denote the Iwahori and the pro-$p$ Iwahori subgroups of $K$ respectively, and $K_{1}$ denote the first principal congruence subgroup of $K$. Write $N$ for 
the normalizer of $I$ (and of $I_{1}$) in $G$. As a group, $N$ is generated by $I$, the center $Z$ of $G$, and by the element $\Pi= \left( \begin{smallmatrix}
0 & 1 \\ p & 0 \end{smallmatrix} \right)$. All representations considered in this paper from now on are over $\overline{\mathbb{F}_{p}}$-vector spaces. For a character $\chi$ of 
$I$, $\chi^{s}$ denotes its $\Pi$-conjugate sending $g$ in $I$ to $\chi(\Pi g\Pi^{-1})$.

A weight  is a smooth irreducible representation of $K$. The $K$-action on such a representation factors through $\Gamma$ and thus any weight is 
described by a 2-tuple $(r_{0},r_{1})\otimes\mathrm{det}^{m}:=\mathrm{Sym}^{r_{0}}\overline{\mathbb{F}_{p}}^2\otimes(\mathrm{Sym}^{r_{1}}\overline{\mathbb{F}_{p}}^2)^{\mathrm{Frob}}\otimes\mathrm{det}^{m}$ of integers with $0\leq r_{0},r_{1}\leq p-1$ together with a determinant twist for some $0\leq m < p^{2}-1$ 
(\cite{bre07}, Lemma 2.16 and Proposition 2.17). Given a weight $\sigma$, its subspace $\sigma^{I_{1}}$ of $I_{1}$-invariants has dimension 1. 
If $\chi_{\sigma}$ denotes the corresponding smooth character of $I$, then there exists a unique weight $\sigma^{s}$ such that $\chi_{\sigma^{s}}=\chi_{\sigma}^{s}$ 
(\cite{pas04}, Theorem 3.1.1).

A \emph{basic 0-diagram} is a triplet $(D_{0},D_{1},r)$ consisting of a smooth $KZ$-representation $D_{0}$, a smooth $N$-representation $D_{1}$ and an
 $IZ$-equivariant isomorphism $r:$ $D_{1}\iso D_{0}^{I_{1}}$ with the trivial action of $p$ on $D_{0}$ and $D_{1}$. Given such a diagram such that 
$D_{0}^{K_{1}}$ has finite dimension, the smooth injective $K$-envelope $\mathrm{inj}_{K}D_{0}$ admits a non-canonical $N$-action which glues 
together with the $K$-action to give a smooth $G$-action on $\mathrm{inj}_{K}D_{0}$ (\cite{bp12}, Theorem 9.8). The $G$-subrepresentation 
of $\mathrm{inj}_{K}D_{0}$ generated by $D_{0}$ is smooth admissible and its $K$-socle equals the $K$-socle $\mathrm{soc}_{K}D_{0}$ of $D_{0}$. 

From now on, assume that $p$ is odd. 
Let $\rho:\mathrm{Gal}(\overline{\mathbb{Q}_{p}}/\mathbb{Q}_{p^{2}})\rightarrow GL_{2}(\overline{\mathbb{F}_{p}})$ be a continuous generic Galois 
representation such that $p$ acts trivially on its determinant and $\mathcal{D}(\rho)$ be the set of weights, called \emph{Diamond weights}, associated to $\rho$ as 
described in \cite{bp12}, Section 11. Breuil and Pa\v sk$\bar{\mathrm{u}}$nas attach a family of basic $0$-diagrams $(D_{0}(\rho), D_{1}(\rho),r)$, 
called \emph{Diamond diagrams}, to $\rho$ such that $\mathrm{soc}_{K}D_{0}(\rho)=\bigoplus_{\sigma\in\mathcal{D}(\rho)}\sigma$ (\cite{bp12}, Theorem 13.8). 

For a finite unramified extension $F$ of $\mathbb{Q}_{p}$ of degree at least 3, Le uses a Diamond diagram attached to an 
\emph{irreducible} $\rho:\mathrm{Gal}(\overline{\mathbb{Q}_{p}}/F)\rightarrow GL_{2}(\overline{\mathbb{F}_{p}})$ to construct an infinite dimensional 
diagram which gives rise to a non-admissible smooth irreducible representation of $GL_{2}(F)$ (\cite{le19}). His strategy does not work for a Diamond diagram attached to an irreducible Galois representation of $\mathrm{Gal}(\overline{\mathbb{Q}_{p}}/\mathbb{Q}_{p^{2}})$ because such a diagram does not have suitable $\Pi$-action dynamics. However, for $F=\mathbb{Q}_{p^{2}}$, 
we observe that a Diamond diagram attached to a \emph{reducible split} $\rho$ has an indecomposable subdiagram with suitable $\Pi$-action dynamics so that  
Le's method can be used to obtain a non-admissible irreducible representation of $G = GL_{2}(\mathbb{Q}_{p^{2}})$.
\\

\noindent\textit{Acknowledgments}. We thank Daniel Le and Sandeep Varma for useful comments on earlier versions of this paper.
The second author also thanks Anand Chitrao for helpful discussions on diagrams.

\section{Reducible Diamond diagram}  
\indent Let $\omega_{2}$ be Serre's fundamental character of level 2 for the fixed embedding $\mathbb{F}_{p^{2}}\hookrightarrow\overline{\mathbb{F}_{p}}$, and 
let $\rho:\mathrm{Gal}(\overline{\mathbb{Q}_{p}}/\mathbb{Q}_{p^{2}})\rightarrow GL_{2}(\overline{\mathbb{F}_{p}})$ be a continuous reducible split generic 
Galois representation. The restriction of $\rho$ to the inertia subgroup is, up to a twist by some character, isomorphic to
\begin{equation*}
\begin{pmatrix}
\omega_{2}^{r_{0}+1+(r_{1}+1)p}&0\\0&1
\end{pmatrix}
\end{equation*} for some $0\leq r_{0},r_{1}\leq p-3$, not both equal to 0 or equal to $p-3$ (\cite{bre07}, Corollary 2.9 (i) and \cite{bp12}, Definition 11.7 (i)). Define the weight 
\begin{equation*}
\sigma:=(r_{0}+1, p-2-r_{1})\otimes\mathrm{det}^{p-1+r_{1}p}.
\end{equation*} 
Then the set of Diamond weights for $\rho$ is given by 
\begin{equation*}
\mathcal{D}(\rho)=\big\lbrace(r_{0}, r_{1}),\sigma,\sigma^{s},(p-3-r_{0},p-3-r_{1})\otimes\mathrm{det}^{r_{0}+1+(r_{1}+1)p}\big\rbrace
\end{equation*} (\cite{bp12}, Lemma 11.2 or Section 16, Example (ii)). Fix a Diamond diagram $(D_{0}(\rho),D_{1}(\rho),r)$ attached to $\rho$, 
and identify $D_{1}(\rho)$ with $D_{0}(\rho)^{I_{1}}$ as $IZ$-representations via $r$. There is a direct sum decomposition 
$D_{0}(\rho)=\bigoplus_{\nu\in\mathcal{D}(\rho)}D_{0,\nu}(\rho)$ of $K$-representations with $\mathrm{soc}_{K}D_{0,\nu}(\rho)=\nu$ (\cite{bp12}, Proposition 13.4).

Now define 
\begin{equation*}
D_{0}:=D_{0,\sigma}(\rho)\oplus D_{0,\sigma^{s}}(\rho)\hspace{.2cm}\text{and}\hspace{.2cm} D_{1}:=D_{0}^{I_{1}}.
\end{equation*} 
It follows from \cite{bp12}, Theorem 15.4 (ii) that $(D_{0},D_{1},r)$ is an indecomposable subdiagram of $(D_{0}(\rho),D_{1}(\rho),r)$. Set
\begin{equation*}
\tau:=(r_{0}+2, r_{1})\otimes\mathrm{det}^{p-2+(p-1)p} \hspace{.2cm} \text{and}\hspace{.2cm} \tau':=(p-1-r_{0},p-3-r_{1})\otimes\mathrm{det}^{r_{0}+(r_{1}+1)p}.
\end{equation*} The graded pieces of the 
socle filtrations of $D_{0,\sigma}(\rho)$ and $D_{0,\sigma^{s}}(\rho)$ are as follows (\cite{bp12}, Theorem 14.8 or Section 16, Example (ii)):
\begin{equation*}
D_{0,\sigma}(\rho):\begin{tikzcd}
\sigma\ar[r, no head]&\tau\oplus\tau^{s}\ar[r,no head]&(p-4-r_{0},r_{1}-1)\otimes\mathrm{det}^{r_{0}+2}
\end{tikzcd} 
\end{equation*}
\begin{equation*}
D_{0,\sigma^{s}}(\rho):\begin{tikzcd}
\sigma^{s}\ar[r, no head]&\tau'\oplus\tau'^{s}\ar[r,no head]&(r_{0}-1,p-4-r_{1})\otimes\mathrm{det}^{(r_{1}+2)p}.
\end{tikzcd}
\end{equation*}
We have from \cite{bp12}, Corollary 14.10 that \begin{equation}\label{d1decomposition}
D_{1}=\chi_{\sigma}\oplus\chi_{\tau}\oplus\chi_{\tau}^{s}\oplus\chi_{\sigma}^{s}\oplus\chi_{\tau'}\oplus\chi_{\tau'}^{s}. 
\end{equation}
For an $IZ$-representation $V$ and an $IZ$-character $\chi$, we write $V^{\chi}$ for the $\chi$-isotypic part of $V$.

\section{The infinite dimensional diagram and the construction}

Let $D_{0}(\infty):=\bigoplus_{i\in\mathbb{Z}}D_{0}(i)$ be the smooth $KZ$-representation with component-wise $KZ$-action, where there is a fixed 
isomorphism $D_{0}(i)\cong D_{0}$ of $KZ$-representations for every $i\in\mathbb{Z}$. Following \cite{le19}, we denote the natural inclusion 
$D_{0}\iso D_{0}(i)\hookrightarrow D_{0}(\infty)$ by $\iota_{i}$, and write $v_{i}:=\iota_{i}(v)$ for $v\in D_{0}$ for every $i\in\mathbb{Z}$. 
Let $D_{1}(\infty):=D_{0}(\infty)^{I_{1}}$. We define the $\Pi$-action on $D_{1}(\infty)$ as follows. 
Let $\lambda=(\lambda_{i})\in\prod_{i\in\mathbb{Z}}\overline{\mathbb{F}_{p}}^{\times}$. For all integers $i\in\mathbb{Z}$, define
\begin{equation*}
\Pi v_{i}:=
\begin{cases}
(\Pi v)_{i} &\text{if $v\in D_{1}^{\chi_{\sigma}}$,}\\
(\Pi v)_{i+1} &\text{if $v\in D_{1}^{\chi_{\tau}}$,}\\
\lambda_{i}(\Pi v)_{i} &\text{if $v\in D_{1}^{\chi_{\tau'}}$.}
\end{cases}
\end{equation*} 
This uniquely determines a smooth $N$-action on $D_{1}(\infty)$ such that $p=\Pi^{2}$ acts trivially on it. Thus we get a basic $0$-diagram 
$D(\lambda):=(D_{0}(\infty),D_{1}(\infty),\mathrm{can})$ with the above actions where can is the canonical inclusion $D_{1}(\infty)\hookrightarrow D_{0}(\infty)$.  

\begin{theorem}\label{existence}
There exists a smooth representation $\pi$ of $G$ such that 
\begin{enumerate}
\item $(\pi\big|_{KZ},\pi\big|_{N},\mathrm{id})$ contains $D(\lambda)$,
\item $\pi$ is generated by $D_{0}(\infty)$ as a $G$-representation, and
\item $\mathrm{soc}_{K}\pi=\mathrm{soc}_{K}D_{0}(\infty)$.
\end{enumerate}
\end{theorem}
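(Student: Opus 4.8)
The strategy is to adapt the gluing argument of Breuil–Pa\v sk\={u}nas (\cite{bp12}, Theorem 9.8) to the infinite-dimensional setting, exactly as Le does in \cite{le19}. The first step is to form the smooth injective $K$-envelope $\mathrm{inj}_{K}D_{0}(\infty)$. Since $D_{0}(\infty)=\bigoplus_{i\in\mathbb{Z}}D_{0}(i)$ with each $D_{0}(i)\cong D_{0}$, and $D_{0}^{K_{1}}$ is finite-dimensional, we have $\mathrm{inj}_{K}D_{0}(\infty)=\bigoplus_{i\in\mathbb{Z}}\mathrm{inj}_{K}D_{0}(i)$; this is an injective object in the category of smooth $\overline{\mathbb{F}_{p}}$-representations of $K$ whose $K$-socle equals $\mathrm{soc}_{K}D_{0}(\infty)$. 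One extends the central character of $D_{0}(\infty)$ (with $p$ acting trivially) to make this a $KZ$-representation. The key point, following \cite{le19}, is that because $D(\lambda)$ is a basic $0$-diagram, the $N$-action on $D_{1}(\infty)=D_{0}(\infty)^{I_{1}}$ can be propagated to an $N$-action on $\mathrm{inj}_{K}D_{0}(\infty)$ compatible with the existing $I Z$-action: one checks that $\Pi(\mathrm{inj}_{K}D_{0}(i))$ again lands in a sum of injective envelopes of the relevant weights, using the explicit socle filtrations of $D_{0,\sigma}(\rho)$ and $D_{0,\sigma^{s}}(\rho)$ displayed above together with the prescribed shift/scaling rule for $\Pi$ on the three isotypic pieces $D_{1}^{\chi_{\sigma}}$, $D_{1}^{\chi_{\tau}}$, $D_{1}^{\chi_{\tau'}}$. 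This uses that $\Pi^{2}=p$ acts trivially, so the action of $N$ is well defined, and that $\Pi$ normalizes $I_{1}$ so it is compatible with restriction to $I_{1}$-invariants.

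Having glued, one obtains a smooth $G=\langle KZ, N\rangle$-action on $\mathrm{inj}_{K}D_{0}(\infty)$ extending both the $KZ$-action and the $N$-action, and restricting to the identity on $D_{1}(\infty)$; this gives the representation containing $D(\lambda)$ in the sense of (1). Then I would let $\pi$ be the $G$-subrepresentation of $\mathrm{inj}_{K}D_{0}(\infty)$ generated by $D_{0}(\infty)$, which immediately gives (2). For (3), since $D_{0}(\infty)\subseteq\pi\subseteq\mathrm{inj}_{K}D_{0}(\infty)$, taking $K$-socles gives $\mathrm{soc}_{K}D_{0}(\infty)\subseteq\mathrm{soc}_{K}\pi\subseteq\mathrm{soc}_{K}\mathrm{inj}_{K}D_{0}(\infty)=\mathrm{soc}_{K}D_{0}(\infty)$, so equality holds.

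The main obstacle is the gluing step, i.e. verifying that the $N$-action prescribed on $D_{1}(\infty)$ extends to $\mathrm{inj}_{K}D_{0}(\infty)$ in a way compatible with the $IZ$-action. In the finite-dimensional case this is \cite{bp12}, Theorem 9.8, whose proof uses an injectivity/lifting argument: one lifts $\Pi$ from $D_{1}(\infty)=\mathrm{inj}_{K}D_{0}(\infty)^{I_{1}}$ to $\mathrm{inj}_{K}D_{0}(\infty)$ using the injectivity of the target as a smooth $K$-representation and the fact that $\Pi$ acts on the $IZ$-structure. One has to be careful that the bookkeeping of the $\mathbb{Z}$-indexed direct sum is compatible: the shift $v_i \mapsto (\Pi v)_{i+1}$ on $D_1^{\chi_\tau}$ means $\Pi$ does not preserve each $\mathrm{inj}_K D_0(i)$, but it does preserve finite partial sums up to the injective envelopes of a controlled finite set of weights, so smoothness and well-definedness are not affected; one should also check that $\Pi$ is invertible (with $\Pi^{-1}=p^{-1}\Pi$, using $\lambda_i\neq 0$) so that we genuinely get an $N$-action and not merely a monoid action. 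Once this is in place, Le's argument (\cite{le19}) carries over essentially verbatim, since the only new feature is the reducible split $D_0$ with its specific $\Pi$-dynamics, which has been arranged precisely so that the shift structure needed for the gluing — and later for non-admissibility — is present.
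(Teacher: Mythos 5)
Your proposal is correct and follows essentially the same route as the paper: pass to the smooth injective $K$-envelope of $D_{0}(\infty)$ (a $\mathbb{Z}$-indexed sum of copies of $\mathrm{inj}_{K}D_{0}$), extend the prescribed $\Pi$-action from $D_{1}(\infty)$ to this envelope compatibly with the $IZ$-action, glue via the Breuil--Pa\v sk\={u}nas/Pa\v sk\={u}nas mechanism to get a smooth $G$-action, and let $\pi$ be the $G$-subrepresentation generated by $D_{0}(\infty)$, with (3) following from the essential-extension property. The paper is merely more explicit where you invoke a generic lifting argument: it splits each copy $\Omega\cong\mathrm{inj}_{I}D_{1}\oplus(1-e)\Omega$ as an $I$-representation, applies the shift/scaling rules to the whole injective $I$-envelopes $\Omega_{\chi_{\sigma}},\Omega_{\chi_{\tau}},\Omega_{\chi_{\tau'}}$ and the identity-shift on $(1-e)\Omega$ (using \cite{bp12}, Lemma 9.6 there), and then cites \cite{pas04}, Corollary 5.5.5 for the gluing.
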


\begin{proof}
Let $\Omega$ be the smooth injective $K$-envelope of $D_{0}$ equipped with the $KZ$-action such that $p$ acts trivially. The smooth injective $I$-envelope 
$\mathrm{inj}_{I}D_{1}$ of $D_{1}$ appears as an $I$-direct summand of $\Omega$. Let $e$ denote the projection of $\Omega$ onto $\mathrm{inj}_{I}D_{1}$. 
There is a unique $N$-action on $\mathrm{inj}_{I}D_{1}$ compatible with that of $I$ and compatible with the action of $N$ on $D_{1}$. By \cite{bp12}, Lemma 9.6, 
there is a non-canonical $N$-action on $(1-e)(\Omega)$ extending the given $I$-action. This gives an $N$-action on $\Omega$ whose restriction to $IZ$ is 
compatible with the action coming from $KZ$ on $\Omega$.

Now let $\Omega(\infty):=\bigoplus_{i\in\mathbb{Z}}\Omega(i)$ with component-wise $KZ$-action where there is a fixed isomorphism $\Omega(i)\cong\Omega$ of 
$KZ$-representations for every $i\in\mathbb{Z}$. We wish to define a compatible $N$-action on $\Omega(\infty)$. As before, denote the natural inclusion 
$\Omega\iso\Omega(i)\hookrightarrow \Omega(\infty)$ by $\iota_{i}$, and write $v_{i}:=\iota_{i}(v)$ for $v\in\Omega$. Let $\Omega_{\chi}$ denote the smooth 
injective $I$-envelope of an $I$-character $\chi$. Thus, from (\ref{d1decomposition}), we have $e(\Omega)=\mathrm{inj}_{I}D_{1}=\Omega_{\chi_{\sigma}}\oplus\Omega_{\chi_{\tau}}\oplus\Omega_{\chi_{\tau}^{s}}\oplus\Omega_{\chi_{\sigma}^{s}}\oplus\Omega_{\chi_{\tau'}}\oplus\Omega_{\chi_{\tau'}^{s}}$. 
If $v\in(1-e)(\Omega)$, we define $\Pi v_{i}:=(\Pi v)_{i}$ for all integers $i$. Otherwise, we define 
$\Pi v_{i}:=(\Pi v)_{i}$ if $v\in\Omega_{\chi_{\sigma}}$, $\Pi v_{i}:=(\Pi v)_{i+1}$ if $v\in\Omega_{\chi_{\tau}}$, 
and $\Pi v_{i}:=\lambda_{i}(\Pi v)_{i}$ if $v\in\Omega_{\chi_{\tau'}}$. By demanding that $\Pi^{2}$ acts trivially, this defines a 
smooth $N$-action on $\Omega(\infty)$ which is compatible with the $N$-action on $D_{1}(\infty)$, and whose restriction to $IZ$ is compatible with the action 
coming from $KZ$ on $\Omega(\infty)$. By \cite{pas04}, Corollary 5.5.5, we have a smooth $G$-action on $\Omega(\infty)$. We then take $\pi$ to be the $G$-representation 
generated by $D_{0}(\infty)$ inside $\Omega(\infty)$. If follows easily from the construction that $\pi$ satisfies the properties (1), (2) and (3). 
\end{proof}

\begin{theorem}
If $\lambda_{i}\neq\lambda_{0}$ for all $i\neq 0$, then any smooth representation $\pi$ of $G$ satisfying the properties (1), (2), and (3) of Theorem \ref{existence} is irreducible and non-admissible.
\end{theorem}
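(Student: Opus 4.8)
The plan is to get non-admissibility essentially for free from property~(1), and to obtain irreducibility by showing that every nonzero $G$-subrepresentation of $\pi$ already contains $D_{0}(\infty)$, imitating \cite{le19} and using the $\Pi$-action dynamics of $D(\lambda)$ together with the hypothesis on the $\lambda_{i}$. For non-admissibility: by property~(1), $D_{0}(\infty)$ embeds into $\pi$ as a $K$-representation; every weight occurring in $D_{0}=D_{0,\sigma}(\rho)\oplus D_{0,\sigma^{s}}(\rho)$ is inflated from $\Gamma=K/K_{1}$, so $K_{1}$ acts trivially on $\mathrm{soc}_{K}D_{0}=\sigma\oplus\sigma^{s}$, hence $D_{0}^{K_{1}}\neq 0$ and $\pi^{K_{1}}\supseteq D_{0}(\infty)^{K_{1}}=\bigoplus_{i\in\mathbb{Z}}D_{0}(i)^{K_{1}}$ is infinite-dimensional. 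As $K_{1}$ is open and compact in $G$, this already shows $\pi$ is non-admissible, and it uses neither~(2) nor~(3) nor the hypothesis on the $\lambda_{i}$.

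For irreducibility, let $0\neq\pi'\subseteq\pi$ be a $G$-subrepresentation; by~(2) it suffices to prove $D_{0}(\infty)\subseteq\pi'$. Since $\pi'$ is smooth and nonzero, $\mathrm{soc}_{K}\pi'\neq 0$, and by~(3) we have $\mathrm{soc}_{K}\pi'\subseteq\mathrm{soc}_{K}\pi=\mathrm{soc}_{K}D_{0}(\infty)=\bigoplus_{i\in\mathbb{Z}}\big(\iota_{i}(\sigma)\oplus\iota_{i}(\sigma^{s})\big)$. Thus $\pi'$ contains a copy of $\sigma$ or of $\sigma^{s}$ inside some $D_{0}(i_{0})$; writing $v_{\sigma}$ for a generator of the line $D_{1}^{\chi_{\sigma}}=\sigma^{I_{1}}$ and $v_{\sigma^{s}}:=\Pi v_{\sigma}$ (a generator of $D_{1}^{\chi_{\sigma}^{s}}$), this means $(v_{\sigma})_{i_{0}}\in\pi'^{I_{1}}$ or $(v_{\sigma^{s}})_{i_{0}}\in\pi'^{I_{1}}$. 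Now $\pi'^{I_{1}}$ is stable under $I$ (the action factoring through $I/I_{1}$, of order prime to $p$, so $\pi'^{I_{1}}$ is a sum of $I$-eigenlines) and under $\Pi$ (as $\Pi$ normalizes $I_{1}$ and $\pi'$ is $G$-stable); since $\Pi(v_{\sigma})_{i_{0}}=(v_{\sigma^{s}})_{i_{0}}$ and $\Pi(v_{\sigma^{s}})_{i_{0}}=(v_{\sigma})_{i_{0}}$ (the $\chi_{\sigma}$-rule and $\Pi^{2}=p$ acting trivially), we may assume $(v_{\sigma})_{i_{0}}\in\pi'^{I_{1}}$.

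From here I would argue, following \cite{le19}, by interweaving two directions inside $\pi'^{I_{1}}$. \emph{Horizontal propagation:} both $\tau$ and $\tau^{s}$ occur in the second socle layer of $D_{0,\sigma}(\rho)$ (\cite{bp12}, Theorem~14.8), so $\Pi$ moves the $\chi_{\tau}$-line of level $i$ to the $\chi_{\tau}^{s}$-line of level $i+1$ and back, and, because $\tau^{s}$ lies above the socle $\sigma$ of $D_{0,\sigma}(\rho)$, the $K$-span of the $\chi_{\tau}^{s}$-vector in level $i+1$ contains $(v_{\sigma})_{i+1}$; this is exactly the ``suitable $\Pi$-action dynamics'' unavailable for irreducible $\rho$, and it shows that once $\pi'$ contains the $\chi_{\tau}$-line in one level it contains $(v_{\sigma})_{i}$ for all $i\in\mathbb{Z}$. \emph{Vertical propagation:} the harder task is to climb from a socle vector $(v_{\sigma})_{i_{0}}$ up to the $\chi_{\tau}$-line (and, after passing to $\sigma^{s}$ via $\Pi$, to the $\chi_{\tau'}$-line), and thence to all of $D_{0}(i_{0})=D_{0,\sigma}(i_{0})\oplus D_{0,\sigma^{s}}(i_{0})$; since the $K$-span of $(v_{\sigma})_{i_{0}}$ is merely $\iota_{i_{0}}(\sigma)$ and $\Pi$ only links it to $\iota_{i_{0}}(\sigma^{s})$, this forces one to use the full $G$-action — the double coset $K\Pi K$, i.e.\ Hecke operators — together with the explicit $K$-module structure of $D_{0,\sigma}(\rho)$ and $D_{0,\sigma^{s}}(\rho)$ from \cite{bp12}, Sections~13 and~14, with property~(3) controlling the $K$-socles of the subrepresentations produced at each stage. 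Running the two directions together from $(v_{\sigma})_{i_{0}}$ yields $D_{0}(i)\subseteq\pi'$ for every $i$, hence $D_{0}(\infty)\subseteq\pi'$, and $\pi'=\pi$ by~(2).

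\emph{Where the hypothesis on the $\lambda_{i}$ enters, and the main obstacle.} The crux, and the only place ``$\lambda_{i}\neq\lambda_{0}$ for all $i\neq 0$'' is used, is in preventing $\pi'$ from being a proper ``diagonal'' subrepresentation — one spread over infinitely many levels but containing no single level $D_{0}(i)$ entirely. That this is a genuine danger is visible already for the degenerate choice $\lambda_{i}=1$ (all $i$): then the summation map $\Omega(\infty)\to\Omega$ built in the proof of Theorem~\ref{existence} becomes $G$-equivariant (check the $\Pi$-rules character by character, using $\Pi^{2}=p$ trivial), so the corresponding $\pi$ admits as a quotient the $G$-subrepresentation of $\Omega$ generated by $D_{0}$, which is admissible while $\pi$ is not, and is therefore reducible. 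The $\chi_{\tau'}$-line is in $D(\lambda)$ precisely to break this: $\Pi$ scales the vector $(v_{\tau'})_{i}$, where $v_{\tau'}$ generates $D_{1}^{\chi_{\tau'}}$, by the level-dependent constant $\lambda_{i}$, so the operators built from $\Pi$ and $K$ act on the span of the $(v_{\tau'})_{i}$ ($i\in\mathbb{Z}$) through scalars that separate distinct levels whenever the corresponding $\lambda_{i}$ differ; ``$\lambda_{i}\neq\lambda_{0}$ for $i\neq 0$'' is exactly what lets one isolate level $0$ — and then, via the horizontal step, any level — inside $\pi'^{I_{1}}$, which is what makes the vertical propagation go through. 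I expect making this level-separation precise, while keeping $\pi'^{I_{1}}$ under control by means of property~(3), to be the technical heart of the argument; the remainder is bookkeeping with the $K$-module structure of the $D_{0,\nu}(\rho)$ and with the $\Pi$-dynamics of $D(\lambda)$.
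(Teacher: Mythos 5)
Your non-admissibility argument is fine and essentially the paper's. The irreducibility part, however, has a genuine gap at its very first concrete step and leaves the two hardest steps as declared intentions rather than proofs. From $\mathrm{soc}_{K}\pi'\subseteq\mathrm{soc}_{K}D_{0}(\infty)$ you conclude that $\pi'$ contains a copy of $\sigma$ (or $\sigma^{s}$) ``inside some $D_{0}(i_{0})$'', i.e.\ that $(v_{\sigma})_{i_{0}}\in\pi'^{I_{1}}$. This is false in general: the $\sigma$-isotypic part of the socle is $\bigoplus_{i}\iota_{i}(\sigma)\cong\sigma\otimes\bigoplus_{i}\overline{\mathbb{F}_{p}}$, so a simple summand of $\mathrm{soc}_{K}\pi'$ is a \emph{diagonal} copy $\big(\sum_{i}c_{i}\iota_{i}\big)(\sigma)$ for some nonzero $(c_{i})\in\bigoplus_{i}\overline{\mathbb{F}_{p}}$, not necessarily a coordinate copy. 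You do recognize later that ruling out diagonal subrepresentations is ``the main obstacle'' and that this is where $\lambda_{i}\neq\lambda_{0}$ must enter, but your argument as structured has already assumed the obstacle away in step one; the propagation you then run from $(v_{\sigma})_{i_{0}}$ does not apply to the actual situation. The paper's proof carries the tuple $(c_{i})$ through the entire argument: it first shows $\big(\sum_{i}c_{i}\iota_{i+j}\big)(D_{0})\subset\pi'$ for all $j$, and only afterwards isolates a single level by a minimal-support argument — apply $\Pi$ to $\big(\sum_{i}c_{i}\iota_{i}\big)(D_{1}^{\chi_{\tau'}})$ to get coefficients $\lambda_{i}c_{i}$, subtract $\lambda_{0}c_{i}$, and observe that $(c_{i}'):=((\lambda_{i}-\lambda_{0})c_{i})$ is nonzero with strictly smaller support, contradicting minimality. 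That concrete mechanism is exactly the ``level separation'' you gesture at but do not supply.

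The second missing piece is what you call vertical propagation. You correctly sense that the $K$-action alone cannot climb from the socle $\sigma$ to the higher layers of $D_{0,\sigma}(\rho)$ and that the full $G$-action is needed, but you leave this as ``the technical heart'' to be filled in with Hecke operators. The paper resolves it by two observations you do not make: first, $\big(\sum_{i}c_{i}\iota_{i}\big)(D_{0,\sigma}(\rho))$ has irreducible $K$-socle $\big(\sum_{i}c_{i}\iota_{i}\big)(\sigma)$, so meeting $\pi'$ nontrivially forces that socle into $\pi'$; second, the map $\delta$ of \cite{bp12}, Section 15 interchanges $\sigma$ and $\sigma^{s}$, and the arguments of \cite{bp12}, Theorem 19.10 (i) and Lemma 19.7 then give $\big(\sum_{i}c_{i}\iota_{i}\big)(D_{0,\sigma^{s}}(\rho))\subset\pi'$; iterating once yields all of $\big(\sum_{i}c_{i}\iota_{i}\big)(D_{0})$. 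Your horizontal propagation via $\chi_{\tau},\chi_{\tau}^{s}$ matches the paper's use of the index-shifting $\Pi$-rules, and your aside about the degenerate case $\lambda_{i}\equiv 1$ (the summation map $\Omega(\infty)\to\Omega$ becoming $G$-equivariant) is a correct and instructive sanity check. But as written the proposal is an outline whose two load-bearing steps — handling diagonal socle constituents from the outset, and the ascent from $\sigma$ to $D_{0}$ via $\delta$ — are missing.
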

\begin{proof} 
Let $\pi'\subseteq\pi$ be a non-zero subrepresentation of $G$. By property (3), we have either $\mathrm{Hom}_{K}(\sigma,\pi')\neq 0$ or $\mathrm{Hom}_{K}(\sigma^{s},\pi')\neq 0$. We consider the case $\mathrm{Hom}_{K}(\sigma,\pi')\neq 0$; the other case is treated analogously. There exists a non-zero $(c_{i})\in\bigoplus_{i\in\mathbb{Z}}\overline{\mathbb{F}_{p}}$ such that 
\begin{equation*}
  \big(\sum_{i}c_{i}\iota_{i}\big)(D_{0,\sigma}(\rho))\cap\pi'\neq 0.
\end{equation*} 
We claim that 
\begin{equation}\label{claim}
  \big(\sum_{i}c_{i}\iota_{i+j}\big)(D_{0})\subset\pi' \hspace{.3cm} \text{for all}  \hspace{.1cm} j\in\mathbb{Z}.
\end{equation} 	
Note that the $K$-socle of $\big(\sum_{i}c_{i}\iota_{i}\big)(D_{0,\sigma}(\rho))$ is $\big(\sum_{i}c_{i}\iota_{i}\big)(\sigma)$ which is irreducible. Hence, $\big(\sum_{i}c_{i}\iota_{i}\big)(D_{0,\sigma}(\rho))\cap\pi'\neq 0$ implies that $\big(\sum_{i}c_{i}\iota_{i}\big)(\sigma)\subset\pi'$. The map $\delta$ defined in \cite{bp12}, Section 15 takes $\sigma$ to $\sigma^{s}$ and vice versa. Therefore, the arguments in the proof of \cite{bp12}, Theorem 19.10 (i) and Lemma 19.7 imply that 
\begin{equation*}
  \big(\sum_{i}c_{i}\iota_{i}\big)(D_{0,\delta(\sigma)}(\rho)) = \big(\sum_{i}c_{i}\iota_{i}\big)(D_{0,\sigma^{s}}(\rho))\subset\pi'.
\end{equation*} 
Repeating the argument now for $\sigma^{s}$, we see that $\big(\sum_{i}c_{i}\iota_{i}\big)(D_{0,\sigma}(\rho))\subset\pi'$. Thus,
\begin{equation*}
\big(\sum_{i}c_{i}\iota_{i}\big)(D_{0})\subset\pi'.
\end{equation*}
Therefore,
\begin{equation*}
\big(\sum_{i}c_{i}\iota_{i}\big)(D_{1}^{\chi_{\tau}})\subset\pi'\hspace{.2cm} \text{and}\hspace{.2cm} \big(\sum_{i}c_{i}\iota_{i}\big)(D_{1}^{\chi_{\tau}^{s}})\subset\pi'.
\end{equation*} 
Since $\pi'$ is stable under the $\Pi$-action, we have 
\begin{equation*}
\big(\sum_{i}c_{i}\iota_{i+1}\big)(D_{1}^{\chi_{\tau}^{s}})\subset\pi'\hspace{.2cm} \text{and}\hspace{.2cm} \big(\sum_{i}c_{i}\iota_{i-1}\big)(D_{1}^{\chi_{\tau}})\subset\pi'.
\end{equation*}
In particular,
\begin{equation*}
\big(\sum_{i}c_{i}\iota_{i+1}\big)(D_{0,\sigma}(\rho))\cap\pi'\neq 0
\hspace{.2cm} \text{and}\hspace{.2cm} \big(\sum_{i}c_{i}\iota_{i-1}\big)(D_{0,\sigma}(\rho))\cap\pi'\neq 0.
\end{equation*}
By the same arguments as above, we find that 
\begin{equation*}
\big(\sum_{i}c_{i}\iota_{i+1}\big)(D_{0})\subset\pi'\hspace{.2cm} \text{and}\hspace{.2cm} \big(\sum_{i}c_{i}\iota_{i-1}\big)(D_{0})\subset\pi'.
\end{equation*}
The claim is now proved by repeatedly using the $\Pi$-action.

For $(d_{i})\in\bigoplus_{i\in\mathbb{Z}}\overline{\mathbb{F}_{p}}$, let $\#(d_{i})$ denote the number of non-zero $d_{i}$'s. Among all the non-zero elements 
 $(c_{i})$ of $\bigoplus_{i\in\mathbb{Z}}\overline{\mathbb{F}_{p}}$ for which $\big(\sum_{i}c_{i}\iota_{i}\big)(D_{0})\subset\pi'$, we pick one with $\#(c_{i})$ minimal. 
We may also assume that $c_{0}\neq 0$ using (\ref{claim}). We now show that $\#(c_{i})=1$. Assume to the contrary that $\#(c_{i})>1$. 
Since $\big(\sum_{i} c_{i}\iota_{i}\big)(D_{1}^{\chi_{\tau'}}) \subset\pi'$ and
$\pi'$ is stable under the $\Pi$-action, we have 
\begin{equation*}
\big(\sum_{i}\lambda_{i} c_{i}\iota_{i}\big)(D_{1}^{\chi_{\tau'}^{s}})\subset\pi'.
\end{equation*} 
Since $\big(\sum_{i}\lambda_{0} c_{i}\iota_{i}\big)(D_{1}^{\chi_{\tau'}^{s}})$ is also clearly in $\pi'$, subtracting it from the above, we get 
\begin{equation*}
\big(\sum_{i}(\lambda_{i}-\lambda_{0})c_{i}\iota_{i}\big)(D_{1}^{\chi_{\tau'}^{s}})\subset\pi'.
\end{equation*}
Writing $(c_{i}'):=((\lambda_{i}-\lambda_{0})c_{i})$, we see that 
\begin{equation*}
\big(\sum_{i}c_{i}'\iota_{i}\big)(D_{0,\sigma^{s}}(\rho))\cap\pi'\neq 0.
\end{equation*}
Following the same arguments as in the previous paragraph, we get that $\big(\sum_{i}c_{i}'\iota_{i}\big)(D_{0})\subset\pi'$. 
However, the hypothesis $\lambda_{i}\neq\lambda_{0}$ for all $i\neq 0$, and the assumption $\#(c_{i})>1$ imply that $(c_{i}')$ is non-zero and 
$\#(c_{i}')=\#(c_{i})-1$ contradicting the minimality of $\#(c_{i})$. Therefore, we have $c_{0}\iota_{0}(D_{0})\subset\pi'$. 
So $\iota_{0}(D_{0})\subset\pi'$. Using (\ref{claim}) again, we get that $\bigoplus_{j\in\mathbb{Z}}\iota_{j}(D_{0})=D_{0}(\infty)\subset\pi'$. 
By property (2), we have $\pi'=\pi$.   

Non-admissibility of $\pi$ is clear because $\pi^{K_{1}}\supseteq\mathrm{soc}_{K}\pi$ and $\mathrm{soc}_{K}\pi$ is not finite dimensional by property (3).
\end{proof}

\end{document}